\newtheorem{obs} [subsection]{Remark}
\newtheorem{exm} [subsection]{Example}
\newtheorem{prop}[subsection]{Proposition}
\newtheorem{conj}[subsection]{Conjecture}
\newtheorem{teor}[subsection]{Theorem}
\newtheorem*{teor*}{Theorem}
\newtheorem{lema}[subsection]{Lemma}
\newtheorem{cor} [subsection]{Corollary}
\newcommand{\Zng}{$\mathbb Z^n$-graded $S$-module}
\def\supp{\operatorname{supp}}
\def\pd{\operatorname{pd}}
\def\depth{\operatorname{depth}}
\def\sdepth{\operatorname{sdepth}}
\def\qdepth{\operatorname{hdepth}}
\def\hdepth{\operatorname{hdepth}}
\begin{document}
\selectlanguage{english}
\frenchspacing
\numberwithin{equation}{section}

\title[Remarks on the sdepth and hdepth of monomial ideals with linear quot.]
{Remarks on the Stanley depth and Hilbert depth of monomial ideals with linear quotients}

\author[Andreea I.\ Bordianu, Mircea Cimpoea\c s % and Christian Krattenthaler
       ]
  {Andreea I.\ Bordianu$^1$ and Mircea Cimpoea\c s$^2$% and Christian Krattenthaler$^3$
	}
\date{}

\keywords{Stanley depth; Hilbert depth; depth; monomial ideals; linear quotients}

\subjclass[2020]{05E40; 06A17; 13A15; 13C15; 13P10}

\footnotetext[1]{ \emph{Andreea I.\ Bordianu}, University Politehnica of Bucharest, Faculty of
Applied Sciences, %Department of Mathematical Methods and Models, 
Bucharest, 060042, E-mail: andreea.bordianu@stud.fsa.upb.ro}
\footnotetext[2]{ \emph{Mircea Cimpoea\c s}, University Politehnica of Bucharest, Faculty of
Applied Sciences, %Department of Mathematical Methods and Models, 
Bucharest, 060042, Romania and Simion Stoilow Institute of Mathematics, Research unit 5, P.O.Box 1-764,
Bucharest 014700, Romania, E-mail: mircea.cimpoeas@upb.ro,\;mircea.cimpoeas@imar.ro}

\begin{abstract}
We prove that if $I$ is a monomial ideal with linear quotients in a ring of polynomials $S$ in $n$ indeterminates
and $\depth(S/I)=n-2$, then $\sdepth(S/I)=n-2$ and, if $I$ is squarefree, $\qdepth(S/I)=n-2$. 

Also, we prove that $\sdepth(S/I)\geq \depth(S/I)$ for a monomial
ideal $I$ with linear quotients which satisfies certain technical conditions.
\end{abstract}

\maketitle

\section{Introduction}

Let $K$ be a field and let $S=K[x_1,x_2,\ldots,x_n]$ be the ring of polynomials in $n$ variables.
Let $M$ be a \Zng. A \emph{Stanley decomposition} of $M$ is a direct sum 
$$\mathcal D: M = \bigoplus_{i=1}^rm_i K[Z_i],$$ as $K$-vector spaces, 
where $m_i\in M$ are homogeneous, $Z_i\subset\{x_1,\ldots,x_n\}$ such that $m_i K[Z_i]$ is a free $K[Z_i]$-module; $m_iK[Z_i]$ is
called a \emph{Stanley subspace} of $M$. We define $\sdepth(\mathcal D)=\min_{i=1}^r |Z_i|$ and 
$$\sdepth(M)=\max\{\sdepth(\mathcal D)\;:\;\mathcal D\text{ is a Stanley decomposition of }M\}.$$
The number $\sdepth(M)$ is called the \emph{Stanley depth} of $M$. Herzog Vl\u adoiu and Zheng \cite{hvz} proved
that this invariant can be computed in a finite number of steps, when $M=I/J$, where $J\subset I\subset S$ 
are monomial ideals.

We say that the multigraded module $M$ satisfies the \emph{Stanley inequality} if $$\sdepth(M)\geq \depth(M).$$
Stanley conjectured in \cite{stanley} that $\sdepth(M)\geq \depth(M)$,
for any \Zng $\;M$. In fact, in this form, the conjecture was stated by Apel in \cite{apel}.
The Stanley conjecture was disproved by Duval et. al \cite{duval}, in the case $M=I/J$, where $(0)\neq J\subset I\subset S$ 
are monomial ideals, but it remains open in the case $M=I$, a monomial ideal.

A monomial ideal $I\subset S$ has \emph{linear quotients}, if there exists $u_1 \leqslant u_2 \leqslant \cdots  \leqslant u_m$, an ordering  
on the minimal set of generators $G(I)$, such that, for any $2\leq j\leq m$, 
the ideal $(u_1,\ldots,u_{j-1}):u_j$ is generated by variables. 

Given a monomial ideal with linear quotients $I\subset S$, Soleyman Jahan 
\cite{jah} noted that $I$ satisfies the Stanley inequality, i.e. $$\sdepth(I)\geq \depth(I).$$ 
However, a similar result for $S/I$, if true, is more difficult to prove, only some particular cases being known. 
For instance, Seyed Fakhari \cite{fakh} proved 
the inequality $$\sdepth(S/I)\geq \depth(S/I)$$ for weakly polymatroidal ideals $I\subset S$, 
which are monomial ideals with linear quotients.

In Theorem \ref{teo1}, we prove
that if $I\subset S$ is a monomial ideal with linear quotients with $\depth(S/I)=n-2$, then $\sdepth(S/I)=n-2$. In Theorem \ref{teo2}, we
prove that if $I\subset S$ is a monomial ideal with linear quotients which has a Stanley decomposition which satisfies certain conditions,
then $\sdepth(S/I)\geq \depth(S/I)$. 
Also, we conjecture that for any monomial ideal $I\subset S$ with linear quotients, there is a variable $x_i$
such that $\depth(S/(I,x_i))\geq \depth(S/I)$ and $\sdepth(S/(I,x_i))\leq \sdepth(S/I)$. 
In Theorem \ref{teo3} we prove that if this conjecture is true, then $\sdepth(S/I)\geq \depth(S/I)$,
for any monomial ideal $I\subset S$ with linear quotients. 

Given a finitely graded $S$-module $M$, its Hilbert depth is
$$\hdepth(M)=\max\left\{r\;:\;  \substack{\text{ There exists a f.g. graded }S\text{-module }N \\ \text{ with }H_M(t)=H_N(t)\text{ and }\depth(N)=r} 
 \right\}.$$
It is well known that $\hdepth(M)\geq \sdepth(M)$. See \cite{bruns} for further details.

Let $0\subset I\subsetneq J\subset S$ be two squarefree monomial ideals. 
For any $0\leq j\leq n$, we let $\alpha_j(J/I)$ to be the number of squarefree monomials $u\in S$ of degree $j$ such that
$u\in J\setminus I$. (In particular, $\alpha_j(I)$ is the number of squarefree monomials of degree $j$ which belong to $I$
and $\alpha_j(S/I)=\binom{n}{j}-\alpha_j(I)$ is the number of squarefree monomials of degree $j$ which do not belong to $I$.)

Also, for $0\leq k\leq d\leq n$, we let
\begin{equation}\label{betak}
\beta_k^d(J/I)=\sum_{j=0}^k (-1)^{k-j} \binom{d-j}{k-j}\alpha_j(J/I).
\end{equation}
(In particular, $\beta_k^d(S/I)=\sum_{j=0}^k (-1)^{k-j} \binom{d-j}{k-j}\alpha_j(S/I)$ and
$\beta_k^d(I)=\sum_{j=0}^k (-1)^{k-j} \binom{d-j}{k-j}\alpha_j(I)$.)

From \eqref{betak}, using an inversion formula, it follows that
\begin{equation}\label{alfak}
\alpha_k(J/I)=\sum_{j=0}^k \binom{d-j}{k-j}\beta_j^d(J/I)\text{ for all }0\leq k\leq d\leq n.
\end{equation}
With the above notations, we proved in \cite[Theorem 2.4]{lucrare1} that
$$\hdepth(J/I)=\max\{d\;:\;\beta_k^d(J/I)\geq 0\text{ for all }0\leq k\leq d\}.$$
If $I\subset S$ is a proper squarefree monomial ideal, we claim that
\begin{equation}\label{alfakk}
\qdepth(S/I)\leq \max\{k\;:\;\alpha_k(S/I)>0\}.
\end{equation} 
Note that $\alpha_n(S/I)=0$, since $x_1\cdots x_n\in I$, and thus $m:=\max\{k\;:\;\alpha_k(S/I)>0\}<n$. %then there is nothing to prove, so assume that $m<n$. 
From \eqref{alfakk} it follows that 
$$\alpha_{m+1}(S/I)=\sum_{j=0}^{m+1}\beta^{m+1}_j(S/I).$$
Since $I\neq S$ it follows that $1\notin I$ and thus $\beta^{m+1}_0(S/I)=\alpha_0(S/I)=1$.
The above identity implies that there exists some $1\leq k\leq m+1$ with $\beta_k^{m+1}(S/I)<0$ and
therefore $\qdepth(S/I)\leq m$, as required.

Also, we will make use of the well known fact that
\begin{equation}\label{qdep}
\qdepth(J/I)\geq \sdepth(J/I).
\end{equation}
In Section \ref{s3} of our paper we study the Hilbert depth of $S/I$, where $I$ is a squarefree monomial ideal with
linear quotients. In Proposition \ref{p22} we compute the numbers $\beta_k^d(I)$'s and $\beta_k^d(S/I)$'s. 
In Corollary \ref{cor23}, we express these numbers in combinatorial terms, thus showing the difficulty 
in finding explicit formulas for $\qdepth(I)$ and $\qdepth(S/I)$. 

The main result of this section is Theorem \ref{teo4}, in which we show that if $I$ is a squarefree monomial ideal with
linear quotients with $\depth(S/I)=n-2$ then $$\qdepth(S/I)=\sdepth(S/I)=\depth(S/I)=n-2.$$
% This result makes use of the fact that 
% we can assume $\deg(u_1)\leq \deg(u_2)\leq \cdots \leq \deg(u_m)$; see \cite{jahz}.

\section{Main results}\label{s2}

Let $I\subset S$ be a monomial ideal and let $G(I)$ be the set of minimal monomial generators
of $I$. We recall that $I$ has \emph{linear quotients}, if there exists a linear order
 $u_1 \leqslant u_2 \leqslant \cdots  \leqslant u_m$  on $G(I)$, such that 
for every $2 \leq j \leq m$, the ideal $(u_1, \ldots , u_{j-1}) : u_j$ is generated by a subset of $n_j$ variables.

We let $I_j:=(u_1,\ldots,u_j)$, for $1\leq j\leq m$. 

Let $Z_1=\{x_1,\ldots,x_n\}$ and 
$Z_j=\{x_i\;:\;x_i\notin (I_{j-1} : u_j)\}$ for $2\leq j\leq m$.

Note that, for any $2\leq j\leq m$, we have
$$I_j/I_{j-1} = u_j (S/(I_{j-1}:u_j)) = u_j K[Z_j].$$
Hence the ideal $I$ has the Stanley decomposition
\begin{equation}\label{sdep}
I = u_1K[Z_1]\oplus u_2K[Z_2] \oplus \cdots \oplus u_mK[Z_m].
\end{equation}
According to \cite[Corollary 2.7]{varbaro}, the projective dimension of $S/I$ is
$$\pd(S/I)=\max\{n_j\;:\;2\leq j\leq m \}+1.$$
Hence, Ausl\"ander-Buchsbaum formula implies that
$$ \depth(S/I)=n-\max\{n_j\;:\;2\leq j\leq m \}-1= $$
\begin{equation}\label{dep}
= \min\{n-n_j\;:\;2\leq j\leq m\}-1=\min\{|Z_j|\;:\;2\leq j\leq m\}-1. 
\end{equation}
Note that, \eqref{sdep} and \eqref{dep} implies $\sdepth I\geq \depth I$, a fact 
which was proved in \cite{jah}.
We recall the following results:

\begin{prop}\label{depcreste}
 Let $I\subset S$ be a monomial ideal and $u\in S\setminus I$ a monomial. Then:
\begin{enumerate}
 \item[(1)] $\depth(S/(I:u))\geq \depth(S/I)$. (\cite[Corollary 1.3]{asia})
 \item[(2)] $\sdepth(S/(I:u))\geq \sdepth(S/I)$. (\cite[Proposition 2.7(2)]{sir})
\end{enumerate}
\end{prop}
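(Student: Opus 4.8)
Both inequalities are quoted from the literature, so we only outline the argument. In each part one first reduces to the case $u=x_i$ a single variable: since $(I:vw)=\bigl((I:v):w\bigr)$ for monomials $v,w$, writing $u=x_{i_1}\cdots x_{i_d}$ and applying the one-variable statement successively to the ideals $I,\,(I:x_{i_1}),\,(I:x_{i_1}x_{i_2}),\dots$ yields the general case by transitivity of $\ge$. So assume $u=x_i$.

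\emph{Part (1)} is \cite[Corollary 1.3]{asia}. The mechanism is the short exact sequence of $\mathbb Z^n$-graded $S$-modules
\begin{equation*}
0\longrightarrow S/(I:x_i)\xrightarrow{\ x_i\ }S/I\longrightarrow S/(I,x_i)\longrightarrow 0 ,
\end{equation*}
which is exact since multiplication by $x_i$ is injective on $S$, has image $x_iS/(x_iS\cap I)\cong S/(I:x_i)$, and cokernel $S/(I,x_i)$. Applying the Depth Lemma to it gives
\begin{equation*}
\depth S/(I:x_i)\ \ge\ \min\bigl\{\depth S/I,\ \depth S/(I,x_i)+1\bigr\},
\end{equation*}
so the statement reduces to the (well-known) inequality $\depth S/(I,x_i)\ge\depth S/I-1$, i.e. that adjoining a variable to a monomial ideal lowers the depth by at most one. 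Combining the two bounds gives $\depth S/(I:x_i)\ge\depth S/I$.

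\emph{Part (2)} is \cite[Proposition 2.7(2)]{sir}, and the natural plan is to push a Stanley decomposition of $S/I$ forward. The monomial $K$-basis of $S/(I:x_i)$ is $\{\,m : x_im\notin I\,\}$, which --- since $x_im\notin I$ forces $m\notin I$ --- is a subset of the monomial basis of $S/I$. Fix a Stanley decomposition $\mathcal D\colon S/I=\bigoplus_{j=1}^{r}m_jK[Z_j]$ with $\sdepth\mathcal D=\sdepth(S/I)$; then the basis of $S/(I:x_i)$ is partitioned as $\bigsqcup_j\{\,m_jw : w\in\Mon K[Z_j],\ x_im_jw\notin I\,\}$. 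If $x_i\in Z_j$, then for every $w\in\Mon K[Z_j]$ also $x_iw\in\Mon K[Z_j]$, so $m_j(x_iw)$ lies in the Stanley space $m_jK[Z_j]\subseteq S/I$ and hence is not in $I$; thus the whole space survives and is again a Stanley space inside $S/(I:x_i)$, of size $|Z_j|\ge\sdepth(S/I)$. The difficulty lies with the spaces having $x_i\notin Z_j$: there the surviving set is $m_j\cdot\bigl(\Mon K[Z_j]\setminus\Mon J_j\bigr)$ for the monomial ideal $J_j=(I:x_im_j)\cap K[Z_j]$ of $K[Z_j]$, and when $J_j$ is nonzero and proper the Stanley depth of $K[Z_j]/J_j$ over $K[Z_j]$ may fall below $\sdepth(S/I)$ --- and simple examples show that one cannot always circumvent this by choosing $\mathcal D$ adapted to $x_i$. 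The content of \cite[Proposition 2.7(2)]{sir} is precisely that the surviving pieces coming from the different Stanley spaces can nonetheless be regrouped into a Stanley decomposition of $S/(I:x_i)$ of Stanley depth at least $\sdepth(S/I)$; carrying out that regrouping is the main obstacle, which is why we invoke the result rather than reprove it here.
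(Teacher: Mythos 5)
The paper offers no proof of this proposition: it is stated purely as a recollection of known results, with the two parts attributed to \cite{asia} and \cite{sir}, so your decision to ultimately invoke those references is exactly what the paper does and there is no argument to compare against. Regarding the sketches you added: the reduction to $u=x_i$ via $(I:vw)=((I:v):w)$ is correct and standard. For part (1), however, be aware that your step is not really a reduction to something simpler. Applying the Depth Lemma to the sequence $0 \to S/(I:x_i) \to S/I \to S/(I,x_i) \to 0$ gives both $\depth S/(I:x_i)\geq\min\{\depth S/I,\ \depth S/(I,x_i)+1\}$ and $\depth S/(I,x_i)\geq\min\{\depth S/(I:x_i)-1,\ \depth S/I\}$; the first branch derives the colon inequality from $\depth S/(I,x_i)\geq\depth S/I-1$, and the second derives $\depth S/(I,x_i)\geq\depth S/I-1$ from the colon inequality. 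The two statements are thus interderivable from the same exact sequence, so calling one of them ``well-known'' while proving the other is circular unless you supply an independent proof of one of them (e.g.\ via Takayama's formula for local cohomology, or the argument actually given in \cite{asia}). Your description of part (2) --- that the Stanley spaces with $x_i\in Z_j$ survive intact and the whole difficulty sits in the spaces with $x_i\notin Z_j$ --- is accurate, and you are right that handling those is the actual content of \cite{sir}; since you explicitly defer to that reference, this matches the paper's treatment.
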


\begin{prop}\label{p12}
Let $0\to U \to M \to N \to 0$ be a short exact sequence of finitely generated $\mathbb Z^n$-graded $S$-modules. Then 
%\begin{enumerate}
%\item[(1)] $\depth(M)\geq \min\{\depth(U),\depth(N)\}$. (Depth Lemma)
%\item[(2)] 
$\sdepth(M)\geq \min\{\sdepth(U),\sdepth(N)\}$. (\cite[Lemma 2.2]{asia})
%\end{enumerate}
\end{prop}

Note that, a proper monomial ideal $I\subset S$ is principal if and only if $\depth(S/I)=n-1$ if and only if $\sdepth(S/I)=n-1$.

%Given a monomial ideal $I\subset S$, it is well known that $\depth(S/I)=n-1$ if and only $\sdepth(S/I)=n-1$ if and only if $I$ is principal.
\begin{lema}\label{p13}
Let $I\subset S$ be a monomial ideal with linear quotients with $\depth(S/I)=n-2$. Then there exists some $i\in [n]$ and
a monomial $u\in G(I)$ such that $(I,x_i)=(u,x_i)$ and $(I:x_i)$ has linear quotients.
\end{lema}

\begin{proof}
If $n=2$ and $I=S$ then $u=1\in G(I)$ and the assertion is obvious. Hence, we may assume that $I$ is proper.

First, note that $I$ is not principal. Since $I$ has linear quotients, we can assume that
$G(I)=\{u_1,\ldots,u_m\}$ such that $((u_1, \ldots , u_{j-1}) : u_j)$ is generated by variables, 
    for every $2\leq j\leq m$. We consider the decomposition \eqref{sdep}, that is
    \begin{equation*}
     I= u_1K[Z_1]\oplus u_2K[Z_2] \oplus \cdots \oplus u_mK[Z_m],
    \end{equation*}
    where $Z_1=\{x_1,\ldots,x_n\}$ and $Z_j$ is the set of variables which do not belong to $((u_1, \ldots , u_{j-1}) : u_j)$,
    for $2\leq j\leq m$. From \eqref{dep}, it follows that $|Z_j|=n-1$ for $2\leq j\leq m$.
		Since $((u_1, \ldots , u_{j-1}) : u_j)=(x_i\;:\;x_i\notin Z_j)$, it follows that for any $2\leq j\leq m$ we have 
    \begin{equation}\label{desk}
     (u_1,\ldots,u_{j-1})\cap u_jK[Z_j] = \{0\}.
    \end{equation}
				
	  We assume, by contradiction, that for any $i \in [n]$ there exists $k_{i} > \ell_{i}\in [m]$ 
		such that $x_i\nmid u_{\ell_{i}}$. We claim that $x_i \in Z_{k_i}$. 
		Indeed, otherwise $Z_{k_{i}}=\{x_1,\ldots,x_n\}\setminus\{x_i\}$ and therefore 
		$u_{k_{i}}u_{\ell_{i}}\in  u_{\ell_{i}} S \cap u_{k_{i}} K[Z_{k_{i}}]$,
    a contradiction to \eqref{desk} for $j=k_{i}$. 
		
		Without any loss of generality, we can assume $k_n=\max\{k_i\;:\;i\in [n]\}$. Since $x_n\in Z_{k_n}$, it follows that
		$Z_{k_n}=\{x_1,\ldots,x_n\}\setminus\{x_t\}$ for some $t\leq n-1$. Since $x_t\in Z_{k_t}$ it follows that $k_t < k_n$
		and, moreover, $x_t\nmid u_{k_t}$, that is $u_{k_t}\in K[Z_{k_n}]$. Therefore
		$u_{k_{t}}u_{k_{n}}\in  u_{k_{t}} S \cap u_{k_{n}} K[Z_{k_{n}}]$,
    a contradiction to \eqref{desk} for $j=k_{n}$.		
		
		Thus, that there exists $i\in [n]$ such that for any $k_i>\ell_i\in[m]$, $x_i\mid u_{\ell_i}$. It implies that $x_i\mid u_j$ for $j=1,\ldots,m-1$.
		It follows that $(I:x_i)=(u'_1,\ldots,u'_m)$ where $u'_j=u_j/x_i$ for $j=1,\ldots,m-1$ and $u'_m=u_m$ if $x_i\nmid u_m$ and $u'_m=u_m/x_i$ if $x_i\mid u_m$. It is clear that
		$\{u'_1,\ldots,u'_{m-1}\}\subset G(I:x_i)$ and 		
		\begin{equation}\label{uuu}
		((u'_1,\ldots,u'_{j-1}):u'_j)=((u_1,\ldots,u_{j-1}):u_j)\text{ for all }2\leq j\leq m-1.
		\end{equation}
                We have $u'_m \in (u_m:x_i) \subseteq (I:x_i)$. If $u'_m\notin G(I:x_i)$ then $G(I:x_i)=\{u'_1,\ldots,u'_{m-1}\}$ and, from \eqref{uuu}, it follows that $(I:x_i)$ has linear quotients.                		
		On the other hand, assume $u'_m\in G(I:x_i)$ then we claim that $x_i\mid u_m$, hence $((u'_1,\ldots,u'_{m-1}):u'_m)=((u_1,\ldots,u_{m-1}):u_m)$ and, again, from \eqref{uuu}, 
		it follows that $(I:x_i)$ has linear quotients. Indeed, otherwise, $u'_m=u_m \in G(I:x_i)$ and $G(I:x_i)=\{u'_1,\ldots,u'_{m-1},u_m\}$, then for $1\leq j\leq m-1$ there exists 
                $\ell_j \in [m]\setminus [i]$ such that $x_{\ell_j}\mid u'_j$ and $x_{\ell_j}\nmid u_m$, and thus $x_{\ell_j},x_i \in \supp(u_i)\setminus\supp(u_m)$, which contradicts that
                $((u_1,\ldots,u_{m-1}):u_m)$ is generated by variables.
\end{proof}

\begin{teor}\label{teo1}
Let $I\subset S$ be a monomial ideal with linear quotients. If $\depth(S/I)=n-2$, then $\sdepth(S/I)=n-2$.
\end{teor}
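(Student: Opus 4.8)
The plan is to prove both $\sdepth(S/I)\le n-2$ and $\sdepth(S/I)\ge n-2$. The upper bound is easy: from $\depth(S/I)=n-2$ we get $n\ge 2$ and $I\neq(0)$, so no Stanley decomposition of $S/I$ can have a $Z_i$ with $n$ elements (that would force $I=(0)$), whence $\sdepth(S/I)\le n-1$; and since $\depth(S/I)\neq n-1$ the ideal $I$ is not principal, so by the observation preceding the theorem $\sdepth(S/I)\neq n-1$, giving $\sdepth(S/I)\le n-2$. The content is the reverse inequality, which I would prove by induction on $d(I):=\sum_{v\in G(I)}\deg v$. From $\depth(S/I)=n-2$ and \eqref{dep} we get $n_j=1$ for all $2\le j\le m$, i.e. $(u_1,\dots,u_{j-1}):u_j=(x_{i_j})$ for a single variable $x_{i_j}$. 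Applying Proposition \ref{p13} with $s=1$, after relabelling the variables we may assume $I+(x_n)=(u,x_n)$ for some $u\in G(I)$; reducing modulo $x_n$ and using the minimality of $G(I)$, this says exactly that every generator of $I$ other than $u$ is divisible by $x_n$. (If also $x_n\mid u$, then $I=x_n\,(I:x_n)$ and the induction below applies directly to the variable-quotient $I:x_n$; so assume $x_n\nmid u$, say $u=u_{j_0}$.)

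Now consider the short exact sequence
$$0\longrightarrow \bigl(S/(I:x_n)\bigr)(-1)\xrightarrow{\ \cdot x_n\ } S/I\longrightarrow S/(I+(x_n))\longrightarrow 0 ,$$
so by Proposition \ref{p12}(2) it suffices to bound below by $n-2$ the Stanley depth of the two outer modules. The last module is $S/(I+(x_n))=S/(u,x_n)$, a cyclic module over the polynomial ring in the $n-1$ variables other than $x_n$ defined by a non-unit monomial ideal, so $\sdepth(S/(u,x_n))\ge n-2$. For the first module: by Proposition \ref{depcreste}(1), $\depth(S/(I:x_n))\ge\depth(S/I)=n-2$, while $\depth(S/(I:x_n))\le n-1$ because $(0)\neq I\subseteq I:x_n$ (if $I:x_n=S$, then $I=(u,x_n)$ and we are already done by the previous sentence). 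If $\depth(S/(I:x_n))=n-1$, then $I:x_n$ is principal and $\sdepth(S/(I:x_n))=n-1\ge n-2$. If $\depth(S/(I:x_n))=n-2$, I would invoke the induction hypothesis on $I:x_n$; this is legitimate because, first, $I:x_n$ is generated by $\{\,u_j/\gcd(u_j,x_n):1\le j\le m\,\}$, whose degree sum is $d(I)-\#\{\,j:x_n\mid u_j\,\}\le d(I)-(m-1)<d(I)$ (recall $m\ge 2$ since $I$ is not principal), and, second — and this is the crux — $I:x_n$ again has linear quotients.

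Proving that $I:x_n$ has linear quotients is the step I expect to be the main obstacle. The natural candidate ordering of the generators is $v_1,\dots,v_m$ with $v_{j_0}=u$ and $v_j=u_j/x_n$ for $j\neq j_0$. Since $(u_l):x_n$ equals $(u_l/x_n)$ when $l\neq j_0$ and $(u_{j_0})$ when $l=j_0$, one has $(v_1,\dots,v_{j-1})=(u_1,\dots,u_{j-1}):x_n$, and therefore
$$(v_1,\dots,v_{j-1}):v_j=\bigl((u_1,\dots,u_{j-1}):x_n\bigr):v_j=(u_1,\dots,u_{j-1}):(x_n v_j);$$
for $j\neq j_0$ this equals $(u_1,\dots,u_{j-1}):u_j=(x_{i_j})$, and for $j=j_0$ it equals $(x_{i_{j_0}}):x_n$, which is $(x_{i_{j_0}})$ or the unit ideal — so every such colon is generated by at most one variable. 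Moreover, for $j\neq j_0$ the monomial $v_j$ is a minimal generator of $I:x_n$ (otherwise some $u_k$ would divide $u_j$ in $G(I)$, impossible), so the only generator that can be redundant is $v_{j_0}=u$: if $(x_{i_{j_0}}):x_n$ is the unit ideal, then $u$ is redundant through the earlier $v_l$'s and may simply be discarded, which keeps the linear-quotient property; the delicate remaining case — $u$ redundant only through later $v_p$'s — forces $j_0=1$ (otherwise an earlier generator, divisible by $x_n$ while $u$ is not, would make $x_{i_{j_0}}=x_n$, contradicting $(x_{i_{j_0}}):x_n\neq S$), and in that case $I:x_n=(u_2,\dots,u_m)/x_n$ and one has to argue separately that linear quotients persists. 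Granting this last claim, the induction closes and yields $\sdepth(S/I)\ge n-2$, which together with the upper bound proves the theorem.
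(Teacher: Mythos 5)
Your strategy is the same as the paper's (Proposition \ref{p13} to get $(I,x_i)=(u,x_i)$, the short exact sequence $0\to S/(I:x_i)\to S/I\to S/(I,x_i)\to 0$, the principal case for $(I:x_i)$, and induction on the total degree of the generators applied to $(I:x_i)$), but the step you yourself flag as the crux is genuinely open in your write-up: you never prove that $I:x_n$ has linear quotients in the delicate case $j_0=1$ with $u$ redundant only through later $v_p$'s, and this case cannot be waved away. It really occurs, and your candidate ordering really fails there. Take $I=(x^2y,\,xy^2z,\,x^2z)\subset K[x,y,z]$: with $u_1=x^2y$, $u_2=xy^2z$, $u_3=x^2z$ one has $((u_1):u_2)=(x)$ and $((u_1,u_2):u_3)=(y)$, so $I$ has linear quotients with single-variable colons and $\depth(S/I)=1=n-2$; moreover $(I,z)=(x^2y,z)$, so here $u=u_1$ and $j_0=1$. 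Now $(I:z)=(x^2,xy^2)$, and $u_1$ is redundant only through the later generator $u_3/z=x^2$; your induced order gives the colon $((xy^2):x^2)=(y^2)$, which is not generated by variables, and only the opposite order $((x^2):xy^2)=(x)$ rescues linear quotients. So closing your induction requires an additional reordering argument (or some other proof that the induction hypothesis applies to $I:x_n$), which you have not supplied; "granting this last claim" the proof works, but the claim is exactly what is missing.

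For comparison, the paper's proof takes the identical route and simply invokes the induction hypothesis for $S/(I:x_i)$ without verifying that $(I:x_i)$ has linear quotients, even though its own closing remark exhibits an ideal with linear quotients whose colon by a variable fails the property; the paper only checks that the total degree drops and treats the principal case separately, as you do. So you have isolated precisely the point that the paper leaves implicit, and your case analysis (the colon computation $(v_1,\ldots,v_{j-1}):v_j=(u_1,\ldots,u_{j-1}):(x_nv_j)$, the reduction of the bad case to $j_0=1$) is correct as far as it goes; but since the remaining case is neither proved nor reducible to a cited result, your proposal is incomplete at that step rather than a complete alternative proof.
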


\begin{proof}
    If $n=2$ then $S/I=0$ and there is nothing to prove, so we may assume $n\geq 3$ and $I$ is proper with 
		$G(I)=\{u_1,\ldots,u_m\}$ for some $m\geq 2$. We use induction on $m$ and
    $d=\sum_{j=1}^m \deg(u_i)$. If $m=2$, then from \cite[Proposition 1.6]{smal} it follows that $\sdepth(S/I)=n-2$.
    If $d=2$, then $I$ is generated by two variables and there is nothing to prove.

    Assume $m>2$ and $d>2$. According to Lemma \ref{p13}, there exist $i\in [n]$ such that $(I,x_i)=(u_m,x_i)$.
    Since $(I,x_i)=(u_m,x_i)$, from \cite[Proposition 1.2]{smal} 
    it follows that $\sdepth(S/(I,x_i))\geq n-2$. If $(I:x_i)$ is principal, then $\sdepth(S/(I:x_i))=\depth(S/(I:x_i))=n-1$.
		
		Assume that $(I:x_i)$ is not principal. We have that $\depth(S/(I:x_i))\leq n-2$. On the other hand, by 
		Proposition \ref{depcreste}(1) we have $\depth(S/(I:x_i))\geq \depth(S/I)=n-2$ and thus $\depth(S/(I:x_i))=n-2$.
		From the proof of Lemma \ref{p13}, we have $G(I:x_i)\subset \{u_1/x_i,\ldots,u_{m-1}/x_i,u_m\}$.
		% Note that, at least one of $u_j$'s must be divisible
    % with $x_i$, otherwise we obtain $$G(I,x_i)=\{u_1,\ldots,u_m,x_i\},$$ a contradiction with the fact that $(I,x_i)=(u_k,x_i)$ and $m\geq 2$.
    It follows that $$d':=\sum_{u\in G(I:x_i)}\deg(u) < d,$$ 
    thus, by induction hypothesis, we have $\sdepth(S/(I:x_i))=n-2$. In both cases, $$\sdepth(S/(I:x_i))\geq n-2.$$
    From Proposition \ref{p12} and the short exact sequence $$0 \to S/(I:x_i) \to S/I \to S/(I,x_i) \to 0,$$
    it follows that $\sdepth(S/I)\geq \min\{ \sdepth(S/(I:x_i)),\sdepth(S/(I,x_i))\} \geq n-2$. Since $I$ is not principal, 
		it follows that $\sdepth(S/I)=n-2$, as required.
\end{proof}

\begin{lema}\label{lemas}
Let $I\subset S$ be a monomial ideal and $u\in S$ a monomial with $(I:u)=(x_1,\ldots,x_m)$.
Assume that $S/I$ has a Stanley decomposition \begin{equation}\label{desco}
\mathcal D:S/I=\bigoplus_{i=1}^r v_iK[Z_i],
\end{equation}
such that there exists $i_0$ with $Z_{i_0}=\{x_{m+1},\ldots,x_n\}$ and $v_{i_0}\mid u$. Then:
$$\sdepth(S/(I,u))\geq \min\{\sdepth(\mathcal D),n-m-1\}.$$
\end{lema}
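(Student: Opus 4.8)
The plan is to keep the decomposition $\mathcal D$ of \eqref{desco} almost untouched and to surgically replace only the summand indexed by $i_0$, transporting everything along the canonical surjection $S/I\to S/(I,u)$.

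First I would examine the exact sequence of $\mathbb Z^n$-graded modules
$$0\longrightarrow (I,u)/I\longrightarrow S/I\longrightarrow S/(I,u)\longrightarrow 0$$
and identify its kernel $N:=(I,u)/I$. Since $I\cap uS=u\,(I:u)=u(x_1,\ldots,x_m)$, one has $N\cong uS/u(x_1,\ldots,x_m)\cong S/(x_1,\ldots,x_m)=K[x_{m+1},\ldots,x_n]$; concretely, inside $S/I$ the submodule $N$ has monomial basis $\{\,uw\;:\;w\text{ a monomial of }K[x_{m+1},\ldots,x_n]\,\}$, and each such $uw$ is nonzero in $S/I$ precisely because $(I:u)=(x_1,\ldots,x_m)$.

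The key step is to show that $N$ lies entirely inside the single summand $v_{i_0}K[Z_{i_0}]$. Granting that $u/v_{i_0}\in K[Z_{i_0}]$ --- which the hypothesis provides, since $Z_{i_0}$ consists of all the variables other than $x_1,\ldots,x_m$ and $v_{i_0}\mid u$ --- we get, for every monomial $w\in K[x_{m+1},\ldots,x_n]$, that $v_{i_0}\mid uw$ and $uw/v_{i_0}=(u/v_{i_0})\,w\in K[Z_{i_0}]$, i.e.\ $uw\in v_{i_0}K[Z_{i_0}]$; as \eqref{desco} is a direct sum of monomial subspaces, the monomial $uw$ then belongs to no summand other than $v_{i_0}K[Z_{i_0}]$, so $N\cap v_iK[Z_i]=0$ for all $i\neq i_0$. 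I expect this confinement of $N$ to be the delicate part: if one only knows $v_{i_0}\mid u$, one has to argue in addition that the monomials $uw$ cannot be spread over several summands, which is where the directness of $\mathcal D$ and the relation $(I:u)=(x_1,\ldots,x_m)$ (forcing $x_\ell u=0$ in $S/I$ for $\ell\le m$, so the $x_1,\ldots,x_m$-degrees of monomials sharing a summand are pinned down) both come in.

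With this in hand, the surjection $S/I\to S/(I,u)$ is injective on $\bigoplus_{i\neq i_0}v_iK[Z_i]$ and induces an isomorphism from $v_{i_0}K[Z_{i_0}]/N$ onto the remaining part of $S/(I,u)$, so that, as $K$-vector spaces,
$$S/(I,u)\;\cong\;\Bigl(\bigoplus_{i\neq i_0}v_iK[Z_i]\Bigr)\ \oplus\ v_{i_0}K[Z_{i_0}]/N.$$
Dividing by $v_{i_0}$ identifies $v_{i_0}K[Z_{i_0}]/N$ with $K[Z_{i_0}]/(u/v_{i_0})$, which is $0$ when $u=v_{i_0}$ and otherwise is a polynomial ring in the $n-m$ variables of $Z_{i_0}$ modulo a nonzero principal monomial ideal, hence has Stanley depth $n-m-1$ (cf.\ the remark preceding Theorem~\ref{teo1}, or \cite[Proposition 1.2]{smal}). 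Splicing a Stanley decomposition of this piece, of Stanley depth $\ge n-m-1$, together with the untouched summands $v_iK[Z_i]$, $i\neq i_0$, produces a Stanley decomposition of $S/(I,u)$ of Stanley depth at least $\min\{\min_{i\neq i_0}|Z_i|,\ n-m-1\}\ge\min\{\sdepth(\mathcal D),\ n-m-1\}$, which is the assertion.
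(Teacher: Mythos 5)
Your argument is essentially the paper's own proof: the same short exact sequence $0\to S/(I:u)\stackrel{\cdot u}{\to} S/I\to S/(I,u)\to 0$, the same identification of the kernel with $uK[x_{m+1},\ldots,x_n]$ sitting inside the single summand $v_{i_0}K[Z_{i_0}]$, and the same replacement of that summand by $v_{i_0}K[Z_{i_0}]/uK[Z_{i_0}]\cong K[x_{m+1},\ldots,x_n]/(w_0)$, whose Stanley depth $n-m-1$ gives the bound. The one step you single out as delicate, namely $u/v_{i_0}\in K[Z_{i_0}]$, is read off from the hypothesis in exactly the same way the paper does it, so the proposal matches the published proof.
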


\begin{proof}
If $\sdepth(S/I)=0$ or $m=n-1$, then there is nothing to prove.
We assume that $\sdepth(S/I)\geq 1$ and $m\leq n-2$. Since $S/(I:u) = S/(x_1,\ldots,x_m) \cong K[x_{m+1},\ldots,x_n]$, from the short exact sequence 
$$ 0 \to S/(I:u) \stackrel{\cdot u}{\longrightarrow} S/I \longrightarrow S/(I,u) \longrightarrow 0,$$
it follows that we have the $K$-vector spaces isomorphism
\begin{equation}\label{labe}
S/I \cong S/(I,u)\oplus uK[x_{m+1},\ldots,x_n]. 
\end{equation}
From our assumption, $uK[x_{m+1},\ldots,x_n]=uK[Z_{i_0}]\subset v_{i_0}K[Z_{i_0}]$. Hence, from \eqref{desco} and \eqref{labe} it follows that
\begin{equation}\label{coor}
 S/(I,u) \cong \left(\bigoplus_{i\neq i_0}v_i K[Z_i]\right) \oplus \frac{v_{i_0}K[Z_{i_0}]}{uK[Z_{i_0}]} 
\cong \left(\bigoplus_{i\neq i_0}v_i K[Z_i]\right) \oplus \frac{K[x_{m+1},\ldots,x_n]}{w_{0}K[x_{m+1},\ldots,x_n]},
\end{equation}
where $w_0=\frac{u}{v_{i_0}}$. On the other hand, $\sdepth\left(\frac{K[x_{m+1},\ldots,x_n]}{w_{0}K[x_{m+1},\ldots,x_n]}\right)=n-m-1$. 
Hence \eqref{coor} and Proposition \ref{p12} yields the required conclusion.
\end{proof}

\begin{teor}\label{teo2}
Let $I\subset S$ be a monomial ideal with linear quotients, $G(I)=\{u_1,\ldots,u_m\}$. Let $I_j=(u_1,\ldots,u_j)$ for $1\leq j\leq m$, such that
$(I_{j-1}:u_j)=(\{x_1,\ldots,x_n\}\setminus Z_j)$, where $Z_j\subset\{x_1,\ldots,x_n\}$, for all $2\leq j\leq m$. 

We assume that for any $2\leq j\leq m$,
there exists a Stanley decomposition $\mathcal D_{j-1}$ of $S/I_{j-1}$ such that $\sdepth(\mathcal D_{j-1}) = \sdepth(S/I_{j-1})$ and there exists a
Stanley subspace $w_{j-1}K[W_{j-1}]$ of $\mathcal D_{j-1}$ with $w_{j-1}\mid u_j$ and $W_{j-1}=Z_j$. 

Then $\sdepth(S/I)\geq \depth(S/I)$.
\end{teor}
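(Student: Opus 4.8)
The plan is to reduce the statement directly to Lemma \ref{lemas}, applied at the top step of the filtration $I_1\subset I_2\subset\cdots\subset I_m=I$, and then to use the depth formula \eqref{dep} to check that the two lower bounds coming out of that lemma both dominate $\depth(S/I)$. First dispose of the trivial case $m=1$: here $I=(u_1)$ is principal, so $\sdepth(S/I)=n-1=\depth(S/I)$ and there is nothing to prove. So assume $m\geq 2$ and put $u:=u_m$. By hypothesis $(I_{m-1}:u_m)=(\{x_1,\ldots,x_n\}\setminus Z_m)$; after relabelling the variables we may assume $\{x_1,\ldots,x_n\}\setminus Z_m=\{x_1,\ldots,x_k\}$ with $k:=n-|Z_m|=n_m$, so that $(I_{m-1}:u_m)=(x_1,\ldots,x_k)$ and $Z_m=\{x_{k+1},\ldots,x_n\}$.

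Next I invoke the hypothesis for $j=m$: it gives a Stanley decomposition $\mathcal D_{m-1}$ of $S/I_{m-1}$ with $\sdepth(\mathcal D_{m-1})\geq\depth(S/I_{m-1})$, together with a Stanley subspace $w_{m-1}K[W_{m-1}]$ of $\mathcal D_{m-1}$ with $w_{m-1}\mid u_m$ and $W_{m-1}=Z_m=\{x_{k+1},\ldots,x_n\}$. These are exactly the hypotheses of Lemma \ref{lemas}, applied with $I_{m-1}$ in place of $I$, with $u_m$ in place of $u$, with $k$ in place of $m$, and with $\mathcal D_{m-1}$ in place of $\mathcal D$. Since $(I_{m-1},u_m)=I_m=I$, the lemma yields
$$\sdepth(S/I)\geq\min\{\sdepth(\mathcal D_{m-1}),\,n-k-1\}=\min\{\sdepth(\mathcal D_{m-1}),\,n-n_m-1\}.$$

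It remains to bound both terms below by $\depth(S/I)$. The ideal $I$ has linear quotients via $u_1\leqslant\cdots\leqslant u_m$, so \eqref{dep} gives $\depth(S/I)=n-\max\{n_j:2\leq j\leq m\}-1$; in particular $n-n_m-1\geq\depth(S/I)$. Also $I_{m-1}$ has linear quotients via $u_1\leqslant\cdots\leqslant u_{m-1}$ with the same integers $n_2,\ldots,n_{m-1}$, hence $\depth(S/I_{m-1})=n-\max\{n_j:2\leq j\leq m-1\}-1\geq n-\max\{n_j:2\leq j\leq m\}-1=\depth(S/I)$ (and if $m-1=1$ this is just $\depth(S/I_{m-1})=n-1\geq\depth(S/I)$, since $I\neq(0)$ forces $\depth(S/I)\leq n-1$). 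Therefore $\sdepth(\mathcal D_{m-1})\geq\depth(S/I_{m-1})\geq\depth(S/I)$, and combining with the displayed inequality gives $\sdepth(S/I)\geq\depth(S/I)$, as required.

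The argument is essentially bookkeeping, so there is no real obstacle; the one point that needs care is the identification of the variable block $Z_m$ with the ``free'' set $\{x_{k+1},\ldots,x_n\}$ so that Lemma \ref{lemas} can be applied verbatim, together with the two depth comparisons via \eqref{dep}. Note that no genuine induction on $m$ is needed: the hypothesis already supplies $\mathcal D_{m-1}$ directly. Its formulation for every $j$ is what one would verify in practice, by building the decompositions $\mathcal D_1,\mathcal D_2,\ldots,\mathcal D_{m-1}$ one generator at a time, each obtained from the previous one through the explicit construction in the proof of Lemma \ref{lemas}.
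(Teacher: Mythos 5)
Your proof is correct and follows essentially the same route as the paper: both reduce the statement to Lemma \ref{lemas} together with the depth formula \eqref{dep}. The only (cosmetic) difference is that the paper records the inequality $\sdepth(S/I_j)\geq\min\{\depth(S/I_{j-1}),n-n_j-1\}$ for every $2\leq j\leq m$, whereas you correctly observe that only the top step $j=m$ is needed, since the hypothesis hands you $\mathcal D_{m-1}$ directly.
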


\begin{proof}
From the hypothesis and Lemma \ref{lemas}, we have that
$$ \sdepth(S/I_j) = \sdepth(S/(I_{j-1},u_j)) \geq \min\{ \sdepth(\mathcal D_{j-1}), n-n_i-1 \} $$
\begin{equation}\label{cucx}
 = \min\{ \sdepth(S/I_{j-1}), n-n_j-1\} \},\text{ for all }2\leq j\leq m, %\geq
\end{equation}
%\geq \min\{\depth(S/I_{j-1}),n-n_j-1\}
where $n_j=n-|Z_j|$, $1\leq j\leq m$. On the other hand, according to \eqref{dep}, 
\begin{equation}\label{cucus}
\depth(S/I)=\min_{j=2}^m\{n-n_j-1\}.
\end{equation}
Since $\sdepth(S/I_1)=\depth(S/I_1)=n-1$, by applying repeatedly \eqref{cucx} we deduce that
$$\sdepth(S/I)=\sdepth(S/I_m)\geq \min_{j=2}^m\{n-n_j-1\}.$$
Hence, from \eqref{cucus} we get the required conclusion.
\end{proof}

\begin{exm}\rm
Let $I=(x_1^2,x_1x_2^2,x_1x_2x_3^2)\subset S=K[x_1,x_2,x_3,x_4]$. Let $u_1=x_1^2$, $u_2=x_1x_2^2$ and $u_3=x_1x_2x_3^2$. Since $((u_1):u_2)=(x_1)$ and
$((u_1,u_2):u_3)=(x_1,x_2)$, it follows that $I$ has linear quotients with repect to the order $u_1\leqslant u_2\leqslant u_3$. 
Moreover, $$I=u_1K[Z_1]\oplus u_2K[Z_2] \oplus u_3K[Z_3] = x_1^2K[x_1,x_2,x_3,x_4]\oplus x_1x_2^2K[x_2,x_3,x_4]\oplus x_1x_2x_3^2K[x_3,x_4].$$
Let $I_1=(u_1)$ and $I_2=(u_1,u_2)$. We consider the Stanley decomposition $$\mathcal D_1:\; S/I_1=K[x_2,x_3,x_4]\oplus x_1K[x_2,x_3,x_4],$$ 
of $S/I_1$ with $\sdepth(\mathcal D_1)=\sdepth(S/I_1)=3$. 
Let $w_1=x_1$ and $W_1=\{x_2,x_3,x_4\}$. Clearly, $W_1=Z_2$ and $w_1\mid u_2$.
As in the proof of Lemma \ref{lemas}, we obtain the Stanley decomposition 
$$\mathcal D_2:\; S/I_2 = K[x_2,x_3,x_4] \oplus \frac{x_1 K[x_2,x_3,x_4]}{x_1x_2^2K[x_2,x_3,x_4]} = K[x_2,x_3,x_4] \oplus x_1K[x_3,x_4] \oplus x_1x_2K[x_3,x_4]$$
of $S/I_2$ with $\sdepth(\mathcal D_2)=\sdepth(S/I_2)=2$. 

Let $w_2=x_1x_2$ and $W_2=\{x_3,x_4\}$. Clearly, $W_2=Z_3$ and $w_2\mid u_3$. Hence, according to Theorem \ref{teo2}, $\sdepth(S/I)\geq \depth(S/I)=1$. Note that $$\mathcal D\;:\;S/I = K[x_2,x_3,x_4] \oplus x_1K[x_3,x_4] \oplus x_1x_2K[x_4] \oplus x_1x_2x_3K[x_4],$$
 is a Stanley decomposition of $S/I$ with $\sdepth(\mathcal D)=1$ and thus $\sdepth(S/I)\geq 1$.

On the other hand, since $(x_1,x_2,x_3)$ is an associated prime to $S/I$, it follows that $\sdepth(S/I)\leq 1$ and thus
$\sdepth(S/I)=1$. Finally, note that 
$$\depth(S/I_2)=2,\; (I_2,x_1)=(x_1)\text{ and }(I_2:x_1)=(x_1,x_2^2).$$
In particular, we have $\sdepth(S/(I_2:x_1))=\depth(S/(I_2:x_1))=2$, while $\sdepth(S/(I_2,x_1))=\depth(S/(I_2,x_1))=3$.
\end{exm}

We propose the following conjecture:

\begin{conj}\label{conj}
If $I\subset S$ is a proper monomial ideal with linear quotients, then there exists $i\in[n]$ such that
$\depth(S/(I,x_i)) \geq \depth(S/I)$.
\end{conj}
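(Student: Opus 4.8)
\medskip
\noindent\textbf{A proof proposal.}

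The plan is to compute $\depth(S/(I,x_i))$ from the linear quotients of $(I,x_i)$ and thereby reduce the conjecture to a combinatorial statement about the colon ideals in a linear quotients order of $I$. Dispose first of the trivial cases: if $I=(u)$ is principal with $u\neq 1$, or if some variable $x_i$ divides every element of $G(I)$, then (choosing $x_i\mid u$, respectively that variable) $(I,x_i)=(x_i)$, so $\depth(S/(I,x_i))=n-1\geq\depth(S/I)$. Hence assume $I$ is not principal and no variable divides all of $G(I)$. By the Auslander--Buchsbaum formula, the inequality $\depth(S/(I,x_i))\geq\depth(S/I)$ is equivalent to $\pd(S/(I,x_i))\leq\pd(S/I)$, and since $\depth(S/(I:x_i))\geq\depth(S/I)$ by Proposition \ref{depcreste}(1), the short exact sequence $0\to S/(I:x_i)\to S/I\to S/(I,x_i)\to 0$ already gives $\pd(S/(I,x_i))\leq\pd(S/I)+1$ for every $i$; one must gain a single unit for a well-chosen $i$.

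Fix a linear quotients order $u_1\leqslant\cdots\leqslant u_m$ of $G(I)$, write $(I_{j-1}:u_j)=(x_t:t\in\tau_j)$ with $|\tau_j|=n_j$, and set $c=\max_{2\leq j\leq m}n_j$, so $\pd(S/I)=c+1$. I would first check that, for each $i$, the ideal $(I,x_i)$ has linear quotients with respect to the order that lists $x_i$ first and then the $u_j$ with $x_i\nmid u_j$ in the induced order, and that in this order the colon ideal at such a $u_j$ equals $(x_t:t\in\tau_j\cup\{i\})$. This follows from the standard combinatorial criterion for linear quotients: given $u_k,u_j$ with $k<j$ and $x_i\nmid u_j$, reduce $u_k/\gcd(u_k,u_j)$ to a variable $x_t=u_\ell/\gcd(u_\ell,u_j)$ with $\ell<j$; if $x_i\mid u_\ell$ then, since $x_i\nmid u_j$, we get $x_i\mid u_\ell/\gcd(u_\ell,u_j)=x_t$, so $x_t=x_i$ and $x_i\mid u_k$, a contradiction --- hence $u_\ell$ survives in $(I,x_i)$ and the reduction persists, and the same dichotomy identifies the variable set of the new colon ideal as $\tau_j\cup\{i\}$. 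Consequently $\pd(S/(I,x_i))\leq\pd(S/I)=c+1$ if and only if $i\in\supp(u_j)\cup\tau_j$ for every index $j$ with $n_j=c$ (the ``maximal-colon'' indices). Thus the conjecture is equivalent to
\[
(\star)\qquad\bigcap_{j:\,n_j=c}\bigl(\supp(u_j)\cup\tau_j\bigr)\neq\emptyset .
\]

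To prove $(\star)$ I would combine a pairwise intersection property with Proposition \ref{p13}. Pairwise property: if $j<k$ and $n_j=n_k=c$, then $u_j/\gcd(u_j,u_k)$ is a nontrivial monomial lying in $(I_{k-1}:u_k)=(x_t:t\in\tau_k)$, hence divisible by some $x_t$ with $t\in\tau_k$, and $x_t\mid u_j$; so $\supp(u_j)\cap\tau_k\neq\emptyset$, i.e. the ``hit sets'' $T_j:=\supp(u_j)\cup\tau_j$ (over $n_j=c$) pairwise intersect and have size $\geq c$. By Proposition \ref{p13} with $s=c$, there exist $\sigma\subset[n]$ with $|\sigma|=c$ and $v\in G(I)$ with $I+(x_t:t\in\sigma)=(v)+(x_t:t\in\sigma)$, so every $u_j$ is divisible by $v$ or by some $x_t$ with $t\in\sigma$. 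The hope is to show $\sigma\subseteq\supp(u_j)\cup\tau_j$ for every maximal-colon index $j$, so that any $i\in\sigma$ witnesses $(\star)$; failing that, to induct on the number of maximal-colon generators (or on $c$), peeling off one $u_k$ with $n_k=c$ at a time and using the pairwise property together with the $(v,\sigma)$-structure to keep track of a common vertex. After polarizing, $(\star)$ could also be read off the links and deletions of the Stanley--Reisner complex attached to $I$ --- though one would then have to control which variable gets adjoined.

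The main obstacle is exactly the step from pairwise to total intersection in $(\star)$: for an arbitrary pairwise-intersecting family there need be no common point (as with $\{1,2\},\{2,3\},\{1,3\}$), so one must extract genuine extra rigidity from the linear-quotients hypothesis --- for instance that the hit sets $T_j$ are nested, or are intervals for a suitable linear order on $[n]$ induced by the quotients, or all contain the set $\sigma$ of Proposition \ref{p13}. The stringency of linear quotients (which already forbids, e.g., $x_1^2$ and $x_2^2$ both preceding $x_1x_2$ in the order) makes it plausible that $(\star)$ always holds; but proving $(\star)$ --- or producing a linear-quotients ideal whose maximal-colon hit sets pairwise meet with empty common intersection --- is the heart of the matter.
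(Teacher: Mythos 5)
You have not proved the statement, and neither does the paper: this is Conjecture \ref{conj}, which the paper leaves open. The paper's only contribution to it is the Remark immediately following, where the conjecture is shown (via the linear-quotients structure of $(I,x_i)$, i.e.\ Lemma \ref{lemaa2}, together with the depth formula \eqref{dep}) to be equivalent to the existence of $i\in[n]$ such that no $j$ satisfies $x_i\nmid u_j$, $x_i\in Z_j$ and $|Z_j|=n-s$. Translating $Z_j=[n]\setminus\tau_j$ and $s=c$, that is exactly your condition $(\star)$. So the part of your proposal that is correct --- the order on $(I,x_i)$ putting $x_i$ first with colon ideals $(\tau_j\cup\{i\})$, the reduction via Auslander--Buchsbaum and the projective dimension formula, and the equivalence of the conjecture with $(\star)$ --- reproduces the reformulation the paper already records; the modest additions (the bound $\pd(S/(I,x_i))\leq\pd(S/I)+1$ from Proposition \ref{depcreste}(1) and the exact sequence, and the pairwise intersection $\supp(u_j)\cap\tau_k\neq\emptyset$ for $j<k$ maximal-colon indices) are correct but do not advance matters decisively.

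The genuine gap is precisely the step you flag yourself: passing from pairwise intersection of the hit sets $T_j=\supp(u_j)\cup\tau_j$ to a common element. Pairwise intersecting families need not have a common point, and the proposal offers only candidate mechanisms (nestedness, an interval structure, or $\sigma\subseteq T_j$ for the pair $(v,\sigma)$ produced by Proposition \ref{p13}) without establishing any of them; indeed Proposition \ref{p13} only says every generator is divisible by $v$ or by some $x_t$, $t\in\sigma$, and gives no control relating $\sigma$ to the colon sets $\tau_j$ of the maximal-colon generators. Since $(\star)$ is the entire content of the conjecture after the (known) reformulation, what you have is a correct reduction plus a research plan, not a proof; the heart of the statement remains open, consistent with its status in the paper.
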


The following result is well know in literature. However, in order of completeness, we give a proof.

\begin{lema}\label{lemaa2}
Let $I\subset S$ be a monomial ideal with linear quotients and $x_i$ a variable. Then $(x_i,I)$ has linear quotients.
Moreover, if $S'=K[x_1,\ldots,x_{i-1},x_{i+1},\ldots,x_n]$, then $(x_i,I)=(x_i,J)$, where $J\subset S'$ is a monomial ideal with linear quotients.
\end{lema}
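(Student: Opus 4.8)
The plan is to produce an explicit linear-quotients order on $G((x_i,I))$. Fix a linear-quotients order $u_1\leqslant u_2\leqslant\cdots\leqslant u_m$ of $G(I)$. Any $u_j$ divisible by $x_i$ is redundant in $(x_i,I)$, so $G((x_i,I))=\{x_i\}\cup\{u\in G(I):x_i\nmid u\}$; write the second set, in the induced order, as $u_{j_1}\leqslant u_{j_2}\leqslant\cdots\leqslant u_{j_p}$ with $j_1<j_2<\cdots<j_p$. I claim that $x_i\leqslant u_{j_1}\leqslant\cdots\leqslant u_{j_p}$ is a linear-quotients order on $G((x_i,I))$. The position of $x_i$ is essential: placing it last would fail, since then one of the colon ideals would be $(u_{j_1},\ldots,u_{j_p}):x_i=(u_{j_1},\ldots,u_{j_p})$, which need not be generated by variables.

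The main computation is that of the colon ideal $C_k:=(x_i,u_{j_1},\ldots,u_{j_{k-1}}):u_{j_k}$ for $1\le k\le p$. As $x_i\nmid u_{j_k}$ we have $x_i:u_{j_k}=(x_i)$, so $C_k=(x_i)+\sum_{t<k}(u_{j_t}:u_{j_k})$, and I would prove
$$C_k=(x_i)+\bigl((u_1,\ldots,u_{j_k-1}):u_{j_k}\bigr).$$
Here $\subseteq$ is clear because each $j_t<j_k$. For $\supseteq$, let $x_a$ be a variable generating $(u_1,\ldots,u_{j_k-1}):u_{j_k}$; if $a=i$ it lies in $(x_i)$, so assume $a\ne i$. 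Then $u_s\mid x_au_{j_k}$ for some $s<j_k$, and since $x_i$ divides neither $x_a$ nor $u_{j_k}$, it does not divide $u_s$; hence $u_s$ is among $u_{j_1},\ldots,u_{j_{k-1}}$ and $x_a\in(u_s:u_{j_k})\subseteq C_k$. Since $(u_1,\ldots,u_{j_k-1}):u_{j_k}$ is generated by variables, so is $C_k$, and $(x_i,I)$ has linear quotients.

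For the last assertion, set $J:=(u_{j_1},\ldots,u_{j_p})$; its generators are coprime to $x_i$, so $J$ is a monomial ideal of $S'$ and clearly $(x_i,J)=(x_i,I)$. To see that $J$ has linear quotients in $S'$ along $u_{j_1}\leqslant\cdots\leqslant u_{j_p}$, I would identify the colon $C'_k:=(u_{j_1},\ldots,u_{j_{k-1}}):u_{j_k}$ exactly. Writing $(u_1,\ldots,u_{j_k-1}):u_{j_k}=(x_a:a\in A)$ for a set $A$ of variable indices, I claim $C'_k=(x_a:a\in A,\ a\ne i)$. The inclusion $\supseteq$ is precisely the argument of the previous paragraph (which in fact already places such an $x_a$ inside $(u_s:u_{j_k})\subseteq C'_k$). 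For $\subseteq$, note that $C'_k$ is generated by the monomials $u_{j_t}/\gcd(u_{j_t},u_{j_k})$ for $t<k$, each of which divides $u_{j_t}$ and is therefore coprime to $x_i$; as each of them lies in $(u_1,\ldots,u_{j_k-1}):u_{j_k}=(x_a:a\in A)$, it must be divisible by some $x_a$ with $a\in A$, necessarily $a\ne i$. Hence $C'_k$ is generated by variables distinct from $x_i$, and since all the monomials involved lie in $S'$, the same description holds with the colon computed in $S'$; thus $J$ has linear quotients.

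The one step that deserves a careful write-up is the $\supseteq$ inclusion above: that deleting from $G(I)$ the generators divisible by $x_i$ removes at most the single variable $x_i$ from each of the colon ideals $(u_1,\ldots,u_{j_k-1}):u_{j_k}$. This is forced by the dichotomy ``$x_i\mid u_s$ or $x_i\nmid u_s$'' together with $x_i\nmid u_{j_k}$; everything else is routine monomial bookkeeping.
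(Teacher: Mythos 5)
Your proposal is correct and follows essentially the same route as the paper: order $G((x_i,I))$ with $x_i$ first followed by the generators of $I$ coprime to $x_i$, identify the colon ideal $(x_i,u_{j_1},\ldots,u_{j_{k-1}}):u_{j_k}$ as $(x_i)$ plus the original colon $(u_1,\ldots,u_{j_k-1}):u_{j_k}$, and then obtain the colon ideals of $J=(u_{j_1},\ldots,u_{j_p})$ by deleting $x_i$ from the corresponding variable sets. The only difference is that you spell out the divisibility bookkeeping (which $u_s$ witnesses $x_a$ in the colon, and why $x_i\nmid u_s$) slightly more explicitly than the paper does.
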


\begin{proof}
We consider the order  $u_1 \leqslant u_2 \leqslant \cdots  \leqslant u_m$  on $G(I)$, such that, for every $2 \leq j \leq m$, the ideal $(I_{j-1} : u_j)$ is generated by a 
nonempty subset $\bar Z_j$ of variables. We assume that $u_{j_1}\leqslant u_{j_2} \leqslant \cdots \leqslant u_{j_p}$ are the minimal monomial generators of $I$ which are not multiple of $x_i$.
We have that $((x_i):u_{j_1})=(x_i)$. Also, for $2\leq k \leq p$, we claim that 
\begin{equation}\label{claim}
 ((x_i,u_{j_1},\ldots,u_{j_{k-1}}):u_{j_k}) = (x_i,\bar Z_{j_k}).
\end{equation}
Indeed, since $((u_1,\ldots,u_{j_k-1}):u_{j_k})=(\bar Z_{j_k})$ and $x_iu_{j_k}\in (x_i,u_{j_1},\ldots,u_{j_{k-1}})$ it follows that 
$(x_i,\bar Z_{j_k})\subset ((x_i,u_{j_1},\ldots,u_{j_{k-1}}):u_{j_k})$.
Conversely, assume that $v\in S$ is a monomial with $vu_{j_k}\in (x_i,u_{j_1},\ldots,u_{j_{k-1}})=(x_i,u_1,\ldots,u_{j_k-1})$. If $x_i\nmid v$, then $vu_{j_k}\in(u_1,\ldots,u_{j_k-1})$, hence
$v\in (\bar Z_{j_k})$. If $x_i\mid v$, then $v\in (x_i, \bar Z_{j_k})$.
%$v\in (x_i,u_1,\ldots,u_{j_k-1})$. 
Hence the claim \eqref{claim} is true and therefore $(x_i,I)$ has linear quotients.
Now, let $J=(u_{j_1},\ldots,u_{j_p})$. For any $2\leq k\leq p$, we have that 
\begin{equation}\label{zece}
((u_{j_1},\ldots,u_{j_{k-1}}):u_{j_k})\subset ((u_1,\ldots,u_{j_k-1}):u_{j_k})=(\bar Z_{j_k}). 
\end{equation}
From \eqref{claim} and \eqref{zece}, one can easily deduce that $((u_{j_1},\ldots,u_{j_{k-1}}):u_{j_k})=(\bar Z_{j_k}\setminus \{x_i\})$.
Hence, $J$ has linear quotients.
\end{proof}

\begin{obs}\rm
Let $I\subset S$ be a monomial ideal with linear quotients, 
$G(I)=\{u_1,\ldots,u_m\}$, $I_j=(u_1,\ldots,u_j)$ for $1\leq j\leq m$, such that $(I_{j-1}:u_j)=(\{x_1,\ldots,x_n\}\setminus Z_j)$, 
where $Z_j\subset\{x_1,\ldots,x_n\}$, for all $2\leq j\leq m$. $I$ has the Stanley decomposition:
$$I=u_1K[Z_1]\oplus u_2K[Z_2]\oplus \cdots \oplus u_mK[Z_m],$$
where $Z_1=\{x_1,\ldots,x_n\}$. We have that 
$$\depth(S/I)=n-s-1,\text{ where }n-s= \min\{|Z_j|\;:\;1\leq j\leq m\}.$$
We claim that Conjecture \ref{conj} is equivalent to the fact that there exists $i\in [n]$ such that there is no $1\leq j\leq m$ with 
$x_i\nmid u_j$, $x_i\in Z_j$ and $|Z_j|=n-s$. Indeed, with the notations of Lemma \ref{lemaa2}, if there is some $u_{j_k}$ with
$x_i\nmid u_{j_k}$ and $x_i\in Z_{j_k}$ then $u_{j_k}K[Z_{j_k}\setminus \{x_i\}]$ is a subspace in the
decomposition of the ideal with linear quotients $J \subset S'=K[x_1,\ldots,x_{i-1},x_{i+1},\ldots,x_n]$ and thus 
$$\depth(S/(I,x_i))=\depth(S'/J)\leq (n-1)-s-1=n-s-2<\depth(S/I).$$ The converse is similar.
\end{obs}

We propose a stronger form of Conjecture \ref{conj}.

\begin{conj}\label{conjj}
If $I\subset S$ is a proper monomial ideal with linear quotients, then there exists $i\in[n]$ such that:
\begin{enumerate}
\item[i)] $\depth(S/(I,x_i)) \geq \depth(S/I)$ and 
\item[ii)] $\sdepth(S/(I,x_i))\leq \sdepth(S/I)$.
\end{enumerate}
\end{conj}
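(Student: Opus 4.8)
The plan is to split the statement into its two clauses, reducing (i) to the combinatorial reformulation recorded in the Remark following Conjecture~\ref{conj}, and then to address (ii) through the identification $\sdepth_S(S/(I,x_i))=\sdepth_{S'}(S'/J)$. First I would dispose of the principal case: if $I=(u)$ then $\depth(S/I)=\sdepth(S/I)=n-1$, and any $x_i$ dividing $u$ gives $(I,x_i)=(x_i)$, whose quotient ring is a polynomial ring in $n-1$ variables, so both invariants equal $n-1$ and both clauses hold. Assume henceforth that $I$ is not principal and write $\depth(S/I)=n-s-1$ with $s=\max\{n_j:2\le j\le m\}\ge 1$. By Lemma~\ref{lemaa2}, $(I,x_i)=(x_i,J)$ with $J\subset S'=K[x_\ell:\ell\ne i]$ a monomial ideal with linear quotients, and the proof of that lemma identifies the numbers attached to $J$ as $|\bar Z_{j_k}\setminus\{x_i\}|$, where $u_{j_1}\leqslant\cdots\leqslant u_{j_p}$ are the generators of $I$ not divisible by $x_i$ and $\bar Z_j$ is the variable set generating $(I_{j-1}:u_j)$. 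Since $S/(I,x_i)\cong S'/J$, applying \eqref{dep} (that is, \cite[Corollary~2.7]{varbaro}) to $J$ shows that clause (i) holds for $x_i$ as soon as $x_i\in\bar Z_j$ for every $j$ with $x_i\nmid u_j$ and $|\bar Z_j|=s$ --- the condition in the Remark.

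Thus the heart of (i) is the combinatorial claim: with $T=\{j:|Z_j|=n-s\}$ and $D_j=Z_j\setminus\supp(u_j)\subseteq[n]$, there is an index $i\notin\bigcup_{j\in T}D_j$, equivalently $\bigcap_{j\in T}(\bar Z_j\cup\supp(u_j))\ne\emptyset$: a variable that, at every maximal quotient step, either divides the generator or already lies in the associated linear-form ideal. Since $|D_j|\le|Z_j|=n-s=\depth(S/I)+1$, the complement of a single $D_j$ has at least $s\ge 1$ elements, so the claim is immediate when $|T|=1$, and a union bound settles it whenever $|T|(n-s)<n$. For arbitrary $T$ I would feed in Proposition~\ref{p13}: it produces $\tau\subset[n]$ with $|\tau|=s$ and $u\in G(I)$ such that $I+(x_i:i\in\tau)=(u)+(x_i:i\in\tau)$, forcing every minimal generator of $I$ to be a multiple of $u$ or to be divisible by some $x_i$ with $i\in\tau$; the hope is that this rigidity constrains $Z_j$ and $\supp(u_j)$ for $j\in T$ tightly enough to force the required common index. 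Proving this combinatorial claim in general is the first main obstacle.

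For clause (ii), since $x_i$ annihilates $S/(I,x_i)$, no Stanley subspace of it can carry $x_i$ in its base set, so $\sdepth_S(S/(I,x_i))=\sdepth_{S'}(S'/J)$ and the goal becomes $\sdepth_{S'}(S'/J)\le\sdepth_S(S/I)$. The natural route is to lift a Stanley decomposition: take $\mathcal D'$ of $S'/J$ with $\sdepth(\mathcal D')=\sdepth_{S'}(S'/J)$, adjoin $x_i$ to every base set to obtain a decomposition of $S/(JS)\cong(S'/J)[x_i]$ of Stanley depth $\sdepth(\mathcal D')+1$, and then pass to $S/I$ by deleting the monomial multiples of those generators $u_\ell$ with $x_i\mid u_\ell$. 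This last surgery respects the decomposition only when $\mathcal D'$ is compatible with those generators in the sense of Lemma~\ref{lemas}, so one must choose $i$ (and $\mathcal D'$) so that this compatibility holds together with the condition from (i); a Lemma~\ref{lemas}-type construction carried out inside $S'$, perhaps within an induction on the number of generators as in Theorem~\ref{teo1}, is the mechanism I would use to produce such decompositions.

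The main obstacle is precisely this simultaneity. The example $I=x_n(x_1,x_2)\subset S=K[x_1,\ldots,x_n]$ is a good test: there $\depth(S/I)=\sdepth(S/I)=n-2$; each of $x_3,\ldots,x_{n-1}$ fails (i), since adding it drops the depth to $n-3$; the variable $x_n$ fails (ii), since $(I,x_n)=(x_n)$ turns the quotient into a polynomial ring in $n-1$ variables and raises $\sdepth$ to $n-1$; while $x_1$ and $x_2$ satisfy both. So neither clause can be secured in isolation, and I expect the genuinely hard point --- the reason the statement is posed as a conjecture --- to be the combinatorial selection of a single $x_i$ meeting both demands, which appears to require the linear-quotients order itself rather than merely the numerical data of the decomposition~\eqref{sdep}.
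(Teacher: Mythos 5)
The statement you were asked to prove is Conjecture \ref{conjj} of the paper: it is posed as an open conjecture, the paper offers no proof of it, and Theorem \ref{teo3} is proved only conditionally on it. So there is no argument in the paper to measure yours against, and, to your credit, what you have written is not presented as a proof either: you explicitly flag the two places where the argument is missing, and those flags are accurate. The parts you do carry out are correct: the reduction of clause i) to the combinatorial statement that some $i$ avoids every set $Z_j\setminus\supp(u_j)$ with $|Z_j|=n-s$ (this is exactly the reformulation in the Remark following Conjecture \ref{conj}, and your derivation of it via Lemma \ref{lemaa2} and \eqref{dep} is sound); the identity $\sdepth(S/(I,x_i))=\sdepth(S'/J)$; and the test example $I=x_n(x_1,x_2)$, which correctly shows that the sets of variables satisfying i) and ii) are each proper subsets of $\{x_1,\ldots,x_n\}$ and neither contains the other, so the two clauses genuinely have to be satisfied by a common $x_i$.

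The genuine gaps are the ones you name, and they are the whole content of the conjecture. For i), the union bound $|T|(n-s)<n$ covers only a thin range of parameters, and the proposed appeal to Proposition \ref{p13} is a hope rather than an argument: that proposition produces a single $\tau$ and a single $u$ with $I+(x_i:i\in\tau)=(u)+(x_i:i\in\tau)$, and it is not clear how this one rigidity statement simultaneously controls all the sets $D_j$, $j\in T$. For ii), the surgery of adjoining $x_i$ to a decomposition of $S'/J$ and then deleting $I/JS$ does not in general yield a Stanley decomposition of $S/I$ at all, let alone one of depth at least $\sdepth(S'/J)$; making it work requires a decomposition compatible with the generators divisible by $x_i$ in the sense of Lemma \ref{lemas}, and you give no mechanism for producing such a decomposition while also respecting the constraint from i). Since the simultaneous choice of one $x_i$ meeting both demands is precisely what is conjectured, your proposal should be read as a correct analysis of why the statement is difficult, not as a proof of it.
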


Note that, if $x_i$ is a minimal generator of $I$, then conditions i) and ii) from Conjecture \ref{conjj} are trivial.

%\pagebreak

\begin{teor}\label{teo3}
If Conjecture \ref{conjj} is true and $I\subset S$ is a proper monomial ideal with linear quotients, then
$\sdepth(S/I)\geq \depth(S/I)$.
\end{teor}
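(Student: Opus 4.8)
The plan is to argue by induction on the number of minimal generators $m$ of $I$, using Conjecture \ref{conjj} to descend to an ideal with strictly fewer generators whose quotient has controlled depth and Stanley depth. If $I$ is principal, then $\sdepth(S/I)=\depth(S/I)=n-1$ and there is nothing to prove, so assume $m\geq 2$. By Conjecture \ref{conjj} there is a variable $x_i$ with $\depth(S/(I,x_i))\geq\depth(S/I)$ and $\sdepth(S/(I,x_i))\leq\sdepth(S/I)$. By Lemma \ref{lemaa2}, $(I,x_i)=(x_i,J)$ where $J\subset S'=K[x_1,\ldots,x_{i-1},x_{i+1},\ldots,x_n]$ is a monomial ideal with linear quotients. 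Since $S/(I,x_i)\cong S'/J$ (as rings, hence with the same depth and Stanley depth), $J$ has strictly fewer than $m$ generators (the generators of $I$ divisible by $x_i$ are dropped, and at least one such generator exists unless $x_i\in G(I)$ — in which case condition ii) forces $\sdepth(S/I)\geq\sdepth(S/(I,x_i))=\sdepth(S/(J',x_i))$ with $J'$ having $m-1$ generators, so again induction applies). Thus the induction hypothesis gives $\sdepth(S'/J)\geq\depth(S'/J)$.

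Chaining the inequalities: $\sdepth(S/I)\geq\sdepth(S/(I,x_i))=\sdepth(S'/J)\geq\depth(S'/J)=\depth(S/(I,x_i))\geq\depth(S/I)$, which is exactly the desired conclusion. The first inequality is condition ii) of the conjecture, the middle equality and inequality come from Lemma \ref{lemaa2} and the induction hypothesis applied to $J$, and the last inequality is condition i). So once the base case and the reduction to fewer generators are in place, the proof is a short chain of (in)equalities.

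I expect the main subtlety to be the bookkeeping around the number of generators: one must check that passing from $I$ to $J$ via Lemma \ref{lemaa2} genuinely reduces the generator count so that the induction is well-founded, and handle separately (but trivially, as the paper's remark after Conjecture \ref{conjj} notes) the case where $x_i$ itself is a minimal generator of $I$. A secondary point is to make sure $J$ is a proper ideal when we invoke the induction hypothesis (if $J=(0)$ then $(I,x_i)=(x_i)$, which only happens if $I\subseteq(x_i)$, and then $I=(x_i\cdot(\text{something}))$ has linear quotients forcing $\sdepth(S/I)\geq\depth(S/I)$ directly, or one handles $J=S'$ similarly); these degenerate cases are easy but should be mentioned. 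No deep new idea is needed beyond the conjecture itself — the theorem is essentially a formal consequence of Conjecture \ref{conjj} combined with Lemma \ref{lemaa2}.
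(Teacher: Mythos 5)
Your overall route is the same as the paper's: invoke Conjecture \ref{conjj} to pick $x_i$, pass to $(I,x_i)=(x_i,J)$ with $J\subset S'$ via Lemma \ref{lemaa2}, apply an induction hypothesis to $S'/J$, and close with the chain $\sdepth(S/I)\geq\sdepth(S/(I,x_i))=\sdepth(S'/J)\geq\depth(S'/J)=\depth(S/(I,x_i))\geq\depth(S/I)$. The difference is the induction variable, and that is where your argument has a gap. The paper inducts on the number of variables $n$: since $S'=K[x_1,\ldots,x_{i-1},x_{i+1},\ldots,x_n]$ has $n-1$ variables, the induction is automatically well-founded and no counting of generators is needed. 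You instead induct on the number of minimal generators $m$ and must therefore show that $J$ has strictly fewer generators than $I$. Your justification --- ``at least one generator of $I$ divisible by $x_i$ exists unless $x_i\in G(I)$'' --- is not correct: the generators of $J$ are precisely the generators of $I$ \emph{not} divisible by $x_i$, so if no minimal generator of $I$ involves $x_i$ at all (which certainly does not force $x_i\in G(I)$), then $J$ has exactly $m$ generators and your induction does not advance. The case $x_i\in G(I)$ is in fact the harmless one (that generator is dropped); the problematic one is the opposite, and you do not address it.

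The gap is repairable, but it requires an argument you have not given: if $x_i$ divides no minimal generator of $I$, then $I$ is extended from $S'$, so $S/(I,x_i)\cong S'/(I\cap S')$ and $\depth(S/(I,x_i))=\depth(S/I)-1$, contradicting condition i) of Conjecture \ref{conjj}; hence any witness $x_i$ of the conjecture must divide some generator, and $J$ really does have fewer than $m$ generators. Either supply this observation or, more simply, replace the induction on $m$ by induction on $n$ as the paper does, which makes the whole issue disappear. Your remarks about the degenerate cases ($I$ principal, $J=(0)$) are fine and do not cause trouble in either setup.
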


\begin{proof}
We use induction on $n\geq 1$. If $n=1$ then there is nothing to prove. Assume $n\geq 2$.
Let $I\subset S$ be a monomial ideal with linear quotients and let $i\in[n]$ such that $\depth(S/(I,x_i)) \geq \depth(S/I)$ 
and $\sdepth(S/(x_i,I))\leq \sdepth(S/I)$. 
%Let $S':=K[x_1,\ldots,x_{i-1},x_{i+1},\ldots,x_n]$. According to Lemma \ref{lemaa2}, 
%$I=(x_i,J)$ where $J\subset S'$ is a monomial ideal with linear quotients.
We consider the short exact sequence 
\begin{equation}\label{mast}
 0 \to \frac{S}{(I:x_i)} \to \frac{S}{I} \to \frac{S}{(I,x_i)} \to 0.
\end{equation}
Let $S':=K[x_1,\ldots,x_{i-1},x_{i+1},\ldots,x_n]$. According to Lemma \ref{lemaa2}, $(x_i,I)=(x_i,J)$ where $J\subset S'$ is a monomial 
ideal with linear quotients. Note that: 
$$\sdepth(S/(x_i,I))=\sdepth(S/(x_i,J))=\sdepth(S'/J)\text{ and }\depth(S/(I,x_i))=\depth(S'/J).$$ 
From the induction hypothesis, we have $\sdepth(S'/J)\geq \depth(S'/J)$. It follows that:
\begin{align*}
& \sdepth(S/I) \geq \sdepth(S/(I,x_i))=\sdepth(S'/J) \\ %\geq \\
& \geq \depth(S'/J)=\depth(S/(I,x_i))\geq \depth(S/I),
\end{align*}
as required.
\end{proof}

\begin{obs}\rm
Note that, if $I\subset S$ has linear quotients, then $(I:x_i)$ has not necessarily the same property. 
For example, the ideal $I=(x_1x_2,x_2x_3x_4,x_3x_4x_5)\subset K[x_1,\ldots,x_5]$ has linear quotients, but
$(I:x_5)=(x_1x_2,x_3x_4)$ has not. Henceforth, in the proof of Theorem \ref{teo2}, we cannot argue, inductively, 
that $\sdepth(S/(I:x_i))\geq \depth(S/(I:x_i))$.
\end{obs}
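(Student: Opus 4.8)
The statement consists of two concrete computations together with an immediate consequence, so the plan is purely verificational. First I would confirm that $I=(x_1x_2,x_2x_3x_4,x_3x_4x_5)$ has linear quotients with respect to the displayed ordering $u_1=x_1x_2\leqslant u_2=x_2x_3x_4\leqslant u_3=x_3x_4x_5$. For a monomial ideal the colon of the earlier generators by $u_j$ is generated by the monomials $u_k/\gcd(u_k,u_j)$ for $k<j$, so only two colon ideals need to be computed. One checks $(u_1):u_2=(x_1x_2/x_2)=(x_1)$ and $(u_1,u_2):u_3=(x_1x_2,\,x_2x_3x_4/(x_3x_4))=(x_1x_2,x_2)=(x_2)$; both are generated by a single variable, so $I$ indeed has linear quotients.

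Next I would compute $(I:x_5)$. Dividing each generator by its $\gcd$ with $x_5$ sends $x_1x_2\mapsto x_1x_2$, $x_2x_3x_4\mapsto x_2x_3x_4$ and $x_3x_4x_5\mapsto x_3x_4$; since $x_3x_4\mid x_2x_3x_4$ the middle generator is redundant, and hence $(I:x_5)=(x_1x_2,x_3x_4)$ with minimal generating set $\{x_1x_2,x_3x_4\}$.

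The core of the argument is to show that $J:=(x_1x_2,x_3x_4)$ fails to have linear quotients. Because $J$ has exactly two minimal generators, an ordering amounts to choosing which generator comes second, so there are only two cases to inspect. For the order $x_1x_2\leqslant x_3x_4$ the relevant colon is $(x_1x_2):(x_3x_4)=(x_1x_2)$, since $\gcd(x_1x_2,x_3x_4)=1$; for the reverse order it is $(x_3x_4):(x_1x_2)=(x_3x_4)$. In each case the colon ideal is principal, generated by a monomial of degree two, hence not generated by a subset of the variables. Therefore neither ordering witnesses linear quotients, and $J$ does not have linear quotients. The closing sentence of the Remark is then immediate: the inductive reductions pass from $I$ to $(I:x_i)$, but any induction hypothesis framed for ideals with linear quotients presupposes that property, which this example shows $(I:x_i)$ may lack, so $(I:x_i)$ cannot be fed back into such a hypothesis to conclude $\sdepth(S/(I:x_i))\geq\depth(S/(I:x_i))$.

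Since every step reduces to an elementary $\gcd$ computation, I do not anticipate a genuine obstacle. The only point requiring care is a clean formulation of the two-generator criterion, namely that $(v_1,v_2)$ has linear quotients if and only if one of $v_1/\gcd(v_1,v_2)$, $v_2/\gcd(v_1,v_2)$ is a single variable, so that inspecting the two orderings genuinely exhausts all possibilities and the failure is established rather than merely asserted for one ordering.
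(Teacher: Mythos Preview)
Your verification is correct: the colon computations $(u_1):u_2=(x_1)$, $(u_1,u_2):u_3=(x_2)$, and $(I:x_5)=(x_1x_2,x_3x_4)$ are all right, and the two-ordering check showing $(x_1x_2,x_3x_4)$ fails linear quotients is clean and complete. The paper itself offers no proof of this remark beyond stating the example, so your write-up supplies exactly the routine details the author left to the reader; there is no alternative approach to compare.
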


\section{Remarks on the Hilbert depth}\label{s3}

Let $I=(u_1,\ldots,u_m)\subset S$ be a proper squarefree monomial with linear quotients, where $(u_1,\ldots,u_{i-1}):u_i$ is 
generated by variables for any $2\leq i\leq m$.
As we seen in the previous section, $I$ has a decomposition
\begin{equation}\label{ecuu}
I=u_1K[Z_1]\oplus u_2K[Z_2] \oplus \cdots \oplus u_mK[Z_m].
\end{equation}
Moreover, since $I$ is squarefree, $Z_1=\{x_1,\ldots,x_n\}$ and, for $2\leq i\leq m$, $Z_i$ consists in the variables which are not in 
$(u_1,\ldots,u_{i-1}):u_i$, it follows that $\supp(u_i)\subset Z_i$ for all $1\leq i\leq m$. 
Therefore, if we denote $d_i=\deg(u_i)$
and $n_i=|Z_i|$, then $d_i\leq n_i$, for all $1\leq i\leq m$.

We use the convention $\binom{r}{s}=0$ for $s<0$.

\begin{lema}\label{l21}
With the above notations, we have that:
\begin{enumerate}
\item[(1)] $\alpha_j(I)=\sum\limits_{i=1}^m \binom{n_i-d_i}{j-d_i}$ for all $0\leq j\leq n$.
\item[(2)] $\alpha_j(S/I)=\binom{n}{j}-\sum\limits_{i=1}^m \binom{n_i-d_i}{j-d_i}$ for all $0\leq j\leq n$.
\end{enumerate}
\end{lema}

\begin{proof}
(1) For convenience, we assume that $u_1=x_1x_2\cdots x_p$ for some $p\leq n$. For $j\geq p$, a squarefree monomial of degree $j$
in $u_1K[Z_1]=u_1K[x_1,\ldots,x_n]$ is of the form $v=u_1w$, where $w\in K[x_{p+1},\cdots,x_n]$ is squarefree of degree $j-p$. Hence,
there are $\binom{n-p}{j-p}=\binom{n_1-d_1}{j-d_1}$ such monomials. Similarly, there are $\binom{n_i-d_i}{j-d_i}$ squarefree
monomials of degree $j$ in $u_iK[Z_i]$ for all $2\leq i\leq m$. Hence, we get the required conclusion from \eqref{ecuu}.

(2) It follows immediately from (1).
\end{proof}

We recall the following combinatorial identity, which can be easily derived from the Chu-Vandermonde identity
\begin{equation}\label{magic}
\sum_{j=0}^k (-1)^{k-j} \binom{d-j}{k-j}\binom{n}{j} = \binom{n-d+k-1}{k}.
\end{equation}
Now, we state the following result, which follows immediately from Lemma \ref{l21} and \eqref{magic}:

\begin{prop}\label{p22}
With the above notations, we have that:
\begin{enumerate}
\item[(1)] $\beta_k^d(I)=\sum\limits_{i=1}^m \sum\limits_{j=0}^k (-1)^{k-j} \binom{d-j}{k-j}\binom{n_i-d_i}{j-d_i}$ 
           for all $0\leq k\leq d\leq n$.
\item[(2)] $\beta_k^d(S/I)=\binom{n-d+k-1}{k}-\sum\limits_{i=1}^m \sum\limits_{j=0}^k (-1)^{k-j} 
\binom{d-j}{k-j}\binom{n_i-d_i}{j-d_i}$ for all $0\leq k\leq d\leq n$.
\end{enumerate}
\end{prop}

If $k\geq D$ then, using \eqref{magic} and taking $\ell=j-D$ we get 
\begin{equation}\label{hyper}
\sum\limits_{j=0}^k (-1)^{k-j} \binom{d-j}{k-j}\binom{N-D}{j-D}= \sum_{\ell=0}^{k-D}(-1)^{k-D-\ell} \binom{d-D-\ell}{k-D-\ell}
\binom{N-D}{\ell} = \binom{N-d+k-D-1}{k-D}.
\end{equation}
Note that \eqref{hyper} is trivially satisfied for $k<D$ also.

From Proposition \ref{p22} and \eqref{hyper} we get the following:

\begin{cor}\label{cor23}
With the above notations, we have that:
\begin{enumerate}
\item[(1)] $\beta_k^d(I)= \sum\limits_{i=1}^m \binom{n_i-d+k-d_i-1}{k-d_i}$ for all $0\leq k\leq d\leq n$.
\item[(2)] $\beta_k^d(S/I)=\binom{n-d+k-1}{k}- \sum\limits_{i=1}^m \binom{n_i-d+k-d_i-1}{k-d_i}$ for all $0\leq k\leq d\leq n$.
\end{enumerate}
\end{cor}

The problem of computing $\qdepth(I)$ and $\qdepth(S/I)$ using directly the formulas given in Corollary \ref{cor23} seems
hopeless. However, we can tackle the following particular case:

\begin{teor}\label{teo4}
Let $I\subset S$ be a proper squarefree monomial ideal with linear quotients with $\depth(S/I)=n-2$. Then 
$$\qdepth(S/I)=\sdepth(S/I)=n-2.$$
\end{teor}

\begin{proof}
From Theorem \ref{teo1} and \eqref{qdep} it follows that 
$$\qdepth(S/I)\geq \sdepth(S/I)=n-2.$$
Hence, in order to complete the proof it is enough to show that $\qdepth(S/I)\leq n-2$.
If $\alpha_{n-1}(S/I)=0$ then, according to \eqref{alfakk}, there is nothing to prove.

Suppose that $\alpha_{n-1}(S/I)=s>0$. 
From \cite[Lemma 2.1]{jahz} we can assume
that $\deg(u_1)\leq \deg(u_2)\leq \cdots \leq \deg(u_m)$, where   $u_1 \leqslant u_2 \leqslant \cdots  \leqslant u_m$ 
is the linear order on $G(I)$. If $m=1$ then $I=(u_1)$ is principal, a contradiction with the hypothesis $\depth(S/I)=n-2$.

Note that, if $x_1x_2\cdots x_n\in G(I)$ then, since $I$ has linear quotients, it follows that $u_1=x_1x_2\cdots x_n$ and $I=(u_1)$, 
a contradiction. Therefore
$$\deg(u_1)\leq \deg(u_2)\leq \cdots \leq \deg(u_m)\leq n-1.$$
We claim that $\deg(u_1)\geq s$. Assume by contradiction that $\deg(u_1)=\ell<s$ and let's say that $u_1=x_1\cdots x_{\ell}$.
Then $v_k=x_1\cdots x_n/x_k \in I$ for all $\ell < k\leq n$ and thus $\alpha_{n-1}(S/I)\leq \ell$, a contradiction.
In particular, we have $\alpha_j(S/I)=\binom{n}{j}$ for all $j\leq s-1$ and thus, from 
\eqref{betak} and \eqref{magic}, it follows that
\begin{equation}\label{equ1}
\beta_k^{n-1}(S/I)=\sum_{j=0}^k (-1)^{k-j}\binom{n-1-j}{k-j}\binom{n}{j}=1\text{ for all }k\leq s-1.
\end{equation}
We assume by contradiction that $\hdepth(S/I)=n-1$. From \eqref{alfak}, it follows that 
\begin{equation}\label{equ2}
s=\alpha_{n-1}(S/I) = \sum_{j=0}^{n-1}\beta_j^{n-1}(S/I)\text{ with }\beta_j^{n-1}(S/I)\geq 0.
\end{equation}
Therefore, from \eqref{equ1} we get 
\begin{equation}\label{equ3}
\beta_j^{n-1}(S/I)=0\text{ for all }s\leq j\leq n-1.
\end{equation}
From \eqref{alfak}, \eqref{equ1} and \eqref{equ3} it follows that 
\begin{equation}\label{alfa}
\alpha_k(S/I)=\sum_{j=0}^k \beta_j^{n-1}(S/I)\binom{n-1-j}{k-j} = \binom{n}{k}-\binom{n-s}{k-s}\text{ for all }0\leq k\leq n.
\end{equation}
From Lemma \ref{l21}(2) and \eqref{alfa} it follows 
\begin{equation}\label{s1}
\sum\limits_{i=1}^m \binom{n_i-d_i}{s-d_i} = 1.
\end{equation}
Since $s\leq d_1\leq d_2\leq \cdots \leq d_m$ and $d_i\leq n_i$ for all $1\leq i\leq m$, from \eqref{s1} it follows that $d_1=s$ and $d_i>s$ for $2\leq i\leq m$.
Since $n_1=n$, from Lemma \ref{l21}(2) it follows that
$$\alpha_{d_2}(S/I)=\binom{n}{d_2}-\sum\limits_{i=1}^m \binom{n_i-d_i}{d_2-d_i}\leq \binom{n}{d_2}-\binom{n-d_1}{d_2-d_1}-\binom{n_2-d_2}{0} =
\binom{n}{d_2}-\binom{n-s}{d_2-s}-1,$$
which contradicts \eqref{alfa}.
\end{proof}

%\begin{obs}\rm
%Note that the inequality $\qdepth(S/I)\geq n-2$ follows also from \eqref{coolio}, by straightforward computations, 
%so it is not necessarily to use in the proof Theorem \ref{teo1} and the fact that $\qdepth(S/I)\geq \sdepth(S/I)$.
%Also, from the proof of Theorem \ref{teo4}, if $I$ has at least one generator of degree $n-1$ then
%$I$ is generated in degree $n-1$.
%We mention that the squarefree monomial ideals $I$ with linear quotients of degree $n-2$ were extensively studied in \cite{jahm}.
%For such an ideal, using a similar argument as in the proof of Theorem \ref{teo4}, one can deduce that either $I$ is principal, either 
%$\qdepth(S/I)=n-3$.
%\end{obs}

\subsection*{Acknowledgements}

We would like to express our gratitude to the anonymous referee who help us to correct, improve and clarify our manuscript.

The second author was supported by a grant of the Ministry of Research, Innovation and Digitization, CNCS - UEFISCDI, 
project number PN-III-P1-1.1-TE-2021-1633, within PNCDI III.

%\subsection*{Data availability}

%Data sharing not applicable to this article as no data sets were generated or analyzed
%during the current study.

%\subsection*{Conflict of interest}

%The authors have no relevant financial or non-financial interests to disclose.

%\textbf{Acknowledgment} 
{}

\end{document}